\newtheorem{thm}{Theorem}[section]
\newtheorem{cor}[thm]{Corollary}
\theoremstyle{definition}
\newcommand{\bC}{\mathbb{C}}
\newcommand{\cC}{\mathcal{C}}
\newcommand{\cM}{\mathcal{M}}
\begin{document}

\baselineskip=17pt


\title{Once More on Positive Commutators\footnote{The paper will appear in Studia Mathematica.}}

\author{Roman Drnov\v sek}

\address{Department of Mathematics,
    Faculty of Mathematics and Physics \\
    University of Ljubljana \\
    Jadranska 19, SI-1000 Ljubljana, Slovenia}

\email{roman.drnovsek@fmf.uni-lj.si}


\begin{abstract}
Let $A$ and $B$ be bounded operators on a Banach lattice $E$ such that the commutator $C = A B - B A$ and the product $BA$ are positive operators.
If the product $AB$ is a power-compact operator, then $C$ is a quasi-nilpotent operator having a triangularizing chain of closed ideals of $E$.
This theorem answers an open question posed in \cite{BDFRZ}, where the study of positive commutators of positive operators has been initiated.
\end{abstract}

\subjclass[2010]{Primary 47B65, 47B47; Secondary 46B42}

\keywords{Banach lattices, positive operators, commutators, spectrum, compact operators}

\maketitle

\section{Introduction}

Let $X$ be a Banach space. 
The spectrum and the spectral radius of a bounded operator $T$ on $X$ are denoted by $\sigma(T)$ and $r(T)$,
respectively.  A bounded operator $T$ on $X$ is said to be {\it power-compact} if $T^n$ is a compact operator for some $n \in \mathbb{N}$.
A {\it chain} $\cC$ is a family of closed subspaces of $X$ that is totally ordered by inclusion. 
We say that $\cC$ is a {\it complete} chain if it is closed under arbitrary intersections and
closed linear spans. If $\cM$ is in a complete chain $\cC$, then the {\it predecessor}
$\cM_{-}$ of $\cM$ in $\cC$ is defined as the closed linear span of
all proper subspaces of $\cM$ belonging to $\cC$. 

Let $E$ be a Banach lattice. An operator $T$ on $E$ is called {\it positive} if the positive cone $E^+$ is invariant under $T$.  
It is well-known that every positive operator $T$ is bounded and that $r(T)$ belongs to $\sigma(T)$.
A bounded operator $T$ on $E$ is said to be {\it ideal-reducible} if there exists a non-trivial closed ideal of $E$
invariant under $T$. Otherwise, it is {\it ideal-irreducible}.
If the chain $\cC$ of closed ideals of $E$ is maximal in the lattice of all closed ideals of $E$ 
and if every one of its members is invariant under an operator $T$ on $E$, 
then $\cC$ is called a {\it triangularizing chain} for $T$, and $T$ is said to be {\it ideal-triangularizable}. 
Note that such a chain is also maximal in the lattice of all closed subspaces of $E$ (see e.g. \cite[Proposition 1.2]{Drn}).

In \cite{BDFRZ} positive commutators of positive operators on Banach lattices are studied.
The main result \cite[Theorem 2.2]{BDFRZ} is the following

\begin{thm}
\label{compact} 
Let $A$ and $B$ be positive compact operators on a Banach lattice $E$ 
such that the commutator $C = A B - B A$ is also positive. 
Then $C$ is an ideal-triangularizable quasi-nilpotent operator.
\end{thm}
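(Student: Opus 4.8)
The plan is to reduce the theorem, via a Zorn-type triangularization argument, to an irreducible ``building block'' statement, and then to settle that statement with de Pagter's theorem (an ideal-irreducible positive compact operator on a Banach lattice of dimension at least two has strictly positive spectral radius) together with its semigroup refinements.

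First I would fix, by Zorn's lemma, a chain $\cC$ of closed ideals of $E$ that is maximal among all chains every member of which is invariant under \emph{both} $A$ and $B$. Such a $\cC$ is complete, contains $\{0\}$ and $E$, and -- since $C = AB - BA$ lies in the algebra generated by $A$ and $B$ -- consists of $C$-invariant ideals. For $\cM \in \cC$ with predecessor $\cM_{-}$, the operators $A$ and $B$ induce positive compact operators $\hat A, \hat B$ on the Banach lattice $\cM / \cM_{-}$, the operator induced by $C$ is $\hat C = \hat A \hat B - \hat B \hat A \geq 0$, and maximality of $\cC$ forces $\{\hat A, \hat B\}$ to be ideal-irreducible on $\cM / \cM_{-}$. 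The crux is then the following claim: \emph{if $A$ and $B$ are positive compact operators on a Banach lattice $F$ with $C = AB - BA \geq 0$ and $\{A, B\}$ ideal-irreducible, then $C = 0$.}

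Granting this key lemma, $\hat C = 0$ on every gap of $\cC$, so $C\cM \subseteq \cM_{-}$ for all $\cM \in \cC$; refining $\cC$ inside each gap to a complete chain of closed subspaces (harmless, since $\hat C$ vanishes there) and invoking Ringrose's theorem for the compact operator $C$ gives $\sigma(C) = \{0\}$, i.e. $C$ is quasi-nilpotent. Since a single operator descends to its own invariant quotients, one further application of Zorn's lemma produces a maximal chain of $C$-invariant closed ideals of $E$; on each of its gaps the induced operator is positive, compact and quasi-nilpotent, so de Pagter's theorem forces that gap to be one-dimensional. Hence this chain is maximal among \emph{all} closed ideals of $E$, that is, a triangularizing chain for $C$, completing the proof modulo the key lemma.

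The key lemma is where the real work lies. Suppose $C \neq 0$; we may assume $\dim F \geq 2$ and $A \neq 0 \neq B$ (otherwise $C = 0$ at once). From $C \geq 0$ and $BA \geq 0$ we get $0 \leq BA \leq AB$, whence $0 \leq C^{n} \leq (AB)^{n}$ for all $n$ and therefore $r(C) \leq r(AB) = r(BA)$. One shows, again by a de Pagter-type reducibility argument for the ideal-irreducible positive semigroup generated by $A$ and $B$, that $\rho := r(AB) = r(BA) > 0$; then $\rho$ is a peripheral eigenvalue of both $AB$ and $BA$, and choosing $x_{0} \geq 0$, $x_{0} \neq 0$, with $AB x_{0} = \rho x_{0}$ gives $C x_{0} = (\rho I - BA) x_{0} \geq 0$, so $BA x_{0} \leq \rho x_{0}$, while pairing with a positive Perron functional of $AB$ yields orthogonality relations tying $x_{0}$ to the range of $BA$. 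The main obstacle is to promote these relations -- exploiting $0 \leq BA \leq AB$ with $r(BA) = r(AB)$ on an ideal-irreducible block -- to the strict-monotonicity conclusion $BA = AB$, equivalently $C = 0$, contradicting $C \neq 0$. Squeezing this rigidity out of the Perron data together with the irreducibility of $\{A, B\}$ is, I expect, the technical heart of the whole argument.
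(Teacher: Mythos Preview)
Your overall architecture—triangularize, reduce to irreducible blocks, apply Ringrose, then invoke de Pagter for ideal-triangularizability—is sound and matches the paper's. But there is a genuine gap: you never prove your key lemma. You say that ``squeezing this rigidity out of the Perron data together with the irreducibility of $\{A,B\}$ is, I expect, the technical heart,'' and then stop. The Perron-vector manipulations you sketch do not by themselves force $BA = AB$; what is needed is a strict-monotonicity principle of the form ``$0 \le S \le T$, $r(S)=r(T)$, plus irreducibility $\Rightarrow$ $S=T$,'' and you have not identified which irreducibility hypothesis makes that go through. Your side claim that $r(AB)>0$ follows from ideal-irreducibility of the \emph{pair} $\{A,B\}$ is likewise unjustified.

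The paper closes this gap by making a different choice at the triangularization step. Instead of a maximal chain of ideals invariant under both $A$ and $B$, it takes a maximal chain $\cC$ of ideals invariant under $AB$ alone; since $0 \le BA \le AB$ and $0 \le C \le AB$, these ideals are automatically $BA$- and $C$-invariant. The payoff is that on each gap $\cM/\cM_{-}$ the induced operator $(AB)_\cM$ is itself ideal-irreducible, so Marek's theorem (if $0\le S\le T$, $r(S)=r(T)$, and $T$ is ideal-irreducible power-compact, then $S=T$) applies directly with $T=(AB)_\cM$ and $S=(BA)_\cM$. The price is that $A$ and $B$ need not individually descend to the quotient, so one cannot read off $r((AB)_\cM)=r((BA)_\cM)$ blockwise. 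The paper supplies this by a global multiplicity count: Pietsch's related-operators principle gives $m(AB,\lambda)=m(BA,\lambda)$ for every $\lambda\ne 0$, and Ringrose's theorem distributes these multiplicities over the gaps; together they force $r((AB)_\cM)=r((BA)_\cM)$ everywhere. Your choice of chain makes that equality of block spectral radii automatic (since $(AB)_\cM=\hat A\hat B$ and $(BA)_\cM=\hat B\hat A$) but forfeits the ideal-irreducibility of $(AB)_\cM$, which is precisely the hypothesis the strict-monotonicity theorem needs—and that is why your argument stalls.
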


Examples in \cite{BDFRZ} show that the compactness assumption of Theorem \ref{compact} cannot be omitted. 
They are based on a simple example that can be obtained by setting $A = S^*$ and $B = S$, 
where $S$ is the unilateral shift on the Banach lattice $l^2$.

Theorem \ref{compact} has been futher extended in \cite[Theorem 3.4]{DK}. Recall that a bounded operator $T$ on 
a Banach space is called a {\it Riesz operator} or an {\it essentially quasi-nilpotent operator} if $\{0\}$ is the essential spectrum of $T$.

\begin{thm}
\label{sum_Riesz}
Let $A$ and $B$ be positive operators on a Banach lattice $E$ such that the sum $A + B$ is 
a Riesz operator. If the commutator $C = A B - B A$ is a power-compact positive operator, 
then it is an ideal-triangularizable quasi-nilpotent operator.
\end{thm}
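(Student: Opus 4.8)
The plan is to follow the two-step pattern of Theorem~\ref{compact}: first reduce to the ideal-irreducible situation by passing to the ``diagonal blocks'' of a maximal chain of invariant closed ideals, and then to show that the commutator vanishes on each block. Throughout I would use the elementary but decisive domination
\begin{equation*}
0 \le C = AB - BA \le AB \le A^2 + AB + BA + B^2 = (A+B)^2 ,
\end{equation*}
which holds because $BA \ge 0$ and $A^2 + BA + B^2 \ge 0$. Since $A+B$ is a Riesz operator, so is $(A+B)^2$, and monotonicity of the spectral radius on positive operators already yields $r(C) \le r(A+B)^2$; more importantly, this inequality is the mechanism by which the Riesz hypothesis on $A+B$ substitutes for the compactness of $A$ and $B$ in Theorem~\ref{compact}, keeping the operators produced along the way within the class for which the spectral tools apply. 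Note also that $C$, being power-compact, is itself a Riesz operator, so its nonzero spectrum consists of isolated eigenvalues of finite multiplicity and, by positivity, $r(C) \in \sigma(C)$.

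For the reduction I would choose, by Zorn's lemma, a chain $\cC$ of closed ideals invariant under both $A$ and $B$ that is maximal among such chains. For $\cM \in \cC$ with predecessor $\cM_{-}$, the operators $A$ and $B$ induce positive operators $\hat A, \hat B$ on the quotient lattice $\cM/\cM_{-}$; by maximality this pair has no common nontrivial invariant closed ideal there, the induced commutator $\hat C = \hat A \hat B - \hat B \hat A \ge 0$ is power-compact, and $\hat A + \hat B$ is again Riesz (the Riesz property is inherited by quotients modulo invariant closed ideals). The key claim, isolated below, is that $\hat C = 0$ on every such block. Granting it, $C$ maps $\cM$ into $\cM_{-}$ for each $\cM \in \cC$; hence every closed ideal lying between $\cM_{-}$ and $\cM$ is automatically $C$-invariant, so $\cC$ can be refined to a chain of closed ideals that is maximal in the lattice of all closed ideals of $E$ and still consists of $C$-invariant members --- a triangularizing chain for $C$. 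Moreover all diagonal blocks of $C$ along $\cC$ vanish, so the Ringrose-type spectral theorem for Riesz operators along a chain of ideals (as in \cite{Drn}) gives $\sigma(C) \subseteq \{0\}$, that is, $C$ is quasi-nilpotent.

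The heart of the matter --- and the step I expect to be the main obstacle --- is therefore the irreducible case: if $A, B \ge 0$ have no common nontrivial invariant closed ideal on a nonzero Banach lattice, $C = AB - BA \ge 0$ is power-compact, and $A+B$ is Riesz, then $C = 0$. I would argue by contradiction. If $C \ne 0$, then, $C$ being a positive Riesz operator, either $r(C)>0$, in which case Krein--Rutman theory supplies a positive eigenvector $Cx = r(C)x$, or $r(C)=0$, in which case $C$ is a nonzero positive quasi-nilpotent power-compact operator and a de~Pagter-type theorem (in its power-compact form) forces $C$ to leave some nontrivial closed ideal invariant. In both situations the task is to manufacture from this data a single nontrivial closed ideal invariant under both $A$ and $B$, contradicting irreducibility. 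The available leverage is the family of commutator identities $A^{n}B - BA^{n} = \sum_{j=0}^{n-1} A^{j} C A^{n-1-j} \ge 0$ and $AB^{n} - B^{n}A = \sum_{j=0}^{n-1} B^{j} C B^{n-1-j} \ge 0$, which propagate the positivity of $C$ through powers of $A$ and $B$ and let one control the ideal generated by the orbit, under the semigroup generated by $A$ and $B$, of the eigenvector (respectively, of the $C$-invariant ideal); the domination $C \le (A+B)^2$ together with the Riesz property of $A+B$ keeps every operator arising in this process within the reach of the de~Pagter and Krein--Rutman tools. Turning this orbit into a \emph{proper} $(A,B)$-invariant ideal is the delicate point, and it is exactly here that I would adapt the technique developed for the compact case in \cite{BDFRZ} and extended in \cite{DK}. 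Once $\hat C = 0$ is established on every block, the assembly of the previous paragraph completes the proof.
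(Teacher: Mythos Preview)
The paper does not prove Theorem~\ref{sum_Riesz}; it is quoted verbatim from \cite[Theorem~3.4]{DK} as background for the paper's own contribution (Theorem~\ref{main}). Hence there is no in-paper argument to compare your sketch against.

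Evaluated on its own merits, your outline is reasonable up to the reduction step, but it has a genuine gap at exactly the place you label ``the delicate point''. In the block where the pair $(\hat A,\hat B)$ is jointly ideal-irreducible you produce, assuming $\hat C\neq 0$, either a Krein--Rutman eigenvector for $\hat C$ or a de~Pagter invariant ideal for $\hat C$. Neither object is, a priori, invariant under $\hat A$ or $\hat B$; the identities $A^nB-BA^n\ge 0$ and $AB^n-B^nA\ge 0$ propagate positivity but do not convert a $\hat C$-invariant ideal into an $(\hat A,\hat B)$-invariant one, nor do they show that the ideal generated by an orbit is proper. Your proposed remedy, to ``adapt the technique developed for the compact case in \cite{BDFRZ} and extended in \cite{DK}'', is effectively circular: the theorem you are trying to prove \emph{is} \cite[Theorem~3.4]{DK}, so invoking that paper's methods without reproducing the actual argument leaves the heart of the proof unwritten. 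A complete proof must explain concretely how the Riesz hypothesis on $A+B$ (as opposed to compactness of $A$ and $B$) forces $\hat C=0$ in the jointly irreducible block; until that mechanism is supplied, the proposal is a framework rather than a proof.
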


In this note we answer affirmatively the open question posed in \cite[Open questions 3.7 (1)]{BDFRZ}
whether is it enough to assume in Theorem \ref{compact} that only one of the
operators $A$ and $B$ is compact.

\section{Preliminaries}

If $T$ is a power-compact operator on a Banach space $X$, then, by the classical spectral theory,  for each $\lambda \in \bC \setminus \{0\}$ 
the operator $\lambda - T$ has finite ascent $k$, i.e.,  $k$ is the smallest natural number such that 
${\rm ker \,} ((\lambda - T)^k) = {\rm ker \,} ((\lambda - T)^{k+1})$.
In this case the {\it (algebraic) multiplicity} $m(T, \lambda)$ of $\lambda$ is the dimension of the subspace 
${\rm ker \,} ((\lambda - T)^k)$. 

We will make use of the following extension of Ringrose's Theorem. 

\begin{thm}
\label{Ringrose} 
Let $T$ be a power-compact operator on a Banach space $X$, and let $\cC$ be a complete chain of closed subspaces
invariant under $T$.  Let $\cC^\prime$ be a subchain of $\cC$ of all subspaces $\cM \in \cC$ such that $\cM_{-} \neq \cM$.
For each $\cM \in \cC^\prime$, define $T_{\cM}$ to be the quotient operator
on $\cM / \cM_{-}$ induced by $T$. Then 
$$ \sigma(T) \setminus \{0\} =  \bigcup_{\cM \in \cC^\prime} \sigma(T_{\cM}) \setminus \{0\}. $$
Moreover, for each $\lambda \in \bC \setminus \{0\}$ we have 
$$ m(T, \lambda) = \sum_{\cM \in \cC^\prime} m(T_{\cM}, \lambda) . $$
\end{thm}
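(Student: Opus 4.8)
The plan is to reduce the statement to the classical Ringrose theorem for compact operators by passing to a suitable power of $T$, and to handle the multiplicity statement by a separate counting argument using the spectral projections available from power-compactness.

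First I would record the structure of the spectrum of a power-compact operator $T$: by the classical theory, if $T^n$ is compact then $\sigma(T) \setminus \{0\}$ consists of at most countably many eigenvalues of finite algebraic multiplicity, accumulating only at $0$, and for each nonzero $\lambda$ the operator $\lambda - T$ has finite ascent $k$ with $X = \ker((\lambda - T)^k) \oplus \overline{{\rm ran}\,}((\lambda - T)^k)$, a decomposition into closed $T$-invariant subspaces with $\ker((\lambda-T)^k)$ finite-dimensional of dimension $m(T,\lambda)$. I would similarly note that each quotient $T_{\cM}$ on $\cM/\cM_-$ is again power-compact (indeed $T_{\cM}^n$ is compact, being induced by the compact operator $T^n$), so the quantities $\sigma(T_{\cM}) \setminus \{0\}$ and $m(T_{\cM}, \lambda)$ make sense.

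For the spectral identity I would proceed in two inclusions. The inclusion $\bigcup_{\cM \in \cC^\prime} \sigma(T_{\cM}) \setminus \{0\} \subseteq \sigma(T) \setminus \{0\}$ is the easy direction: if $\lambda \neq 0$ lies in no $\sigma(T_{\cM})$, one shows $\lambda - T$ is invertible by a transfinite/Zorn argument along the complete chain $\cC$, exactly as in the classical proof — the set of $\cM \in \cC$ for which $\lambda - T$ restricted to $\cM$ is bounded below and has the right range behaves well under intersections and closed spans, and the ``gaps'' $\cM/\cM_-$ are exactly where something could go wrong. For the reverse inclusion, fix $\lambda \in \sigma(T) \setminus \{0\}$; then $\lambda^n \in \sigma(T^n)$ and since $T^n$ is compact, the classical Ringrose theorem applied to $T^n$ and the chain $\cC$ (which is invariant under $T^n$) gives some $\cM \in \cC^\prime$ with $\lambda^n \in \sigma((T^n)_{\cM}) = \sigma((T_{\cM})^n)$, hence $\mu \in \sigma(T_{\cM})$ for some $n$-th root $\mu$ of $\lambda^n$. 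This is not quite $\lambda$ itself, so the genuinely delicate point — and the step I expect to be the main obstacle — is to upgrade this to $\lambda \in \sigma(T_{\cM})$ for a possibly different $\cM$; I would handle it by a direct spectral-projection argument rather than through powers: using the Riesz projection $P_\lambda$ for $T$ associated to the spectral set $\{\lambda\}$, restrict attention to the finite-dimensional range of $P_\lambda$, intersect the chain $\cC$ with it to get a finite flag, and read off that $\lambda$ must appear on some quotient of that flag, which corresponds to some $\cM \in \cC^\prime$.

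Finally, for the multiplicity formula I would fix $\lambda \in \bC \setminus \{0\}$ and use the Riesz decomposition $X = R \oplus N$ where $N = \ker((\lambda - T)^k)$ is finite-dimensional with $\dim N = m(T,\lambda)$ and both summands are closed and $T$-invariant. The chain $\cC$ induces, by intersecting each member with $N$, a finite chain in $N$; the key computation is that for each $\cM \in \cC^\prime$ the dimension $\dim((\cM \cap N)/(\cM_- \cap N))$ equals $m(T_{\cM}, \lambda)$ — this follows because, on the quotient $\cM/\cM_-$, the generalized eigenspace for $\lambda$ is precisely the image of $\cM \cap N$, using that $R$ contributes nothing to the $\lambda$-eigenstructure. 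Summing over $\cM \in \cC^\prime$ telescopes along the finite flag in $N$ to give $\dim N = m(T,\lambda)$, which is the claimed identity. Care is needed to check that only finitely many $\cM \in \cC^\prime$ contribute (i.e. give a nonzero term), which again follows from finite-dimensionality of $N$, so the sum is really a finite sum and the telescoping is legitimate.
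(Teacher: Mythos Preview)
The paper does not give a self-contained proof: it cites \cite[Theorems 7.2.7 and 7.2.9]{RaRo} for compact $T$ and simply asserts that inspecting those proofs shows they go through verbatim when $T$ is merely power-compact (also pointing to \cite{Ko} for the spectral equality in the polynomially compact case). Your proposal instead sketches a direct Riesz-projection argument, which is in essence how one would carry out that inspection, and the multiplicity part is correct in outline. The key technical point you are using implicitly, and should state, is that for power-compact $T$ every closed $T$-invariant subspace $\cM$ is invariant under the spectral projection $P_\lambda$ associated with a nonzero $\lambda$: indeed $\sigma(T|_{\cM})\setminus\{0\}$ consists of eigenvalues of $T$, so the contour defining $P_\lambda$ lies in $\rho(T|_{\cM})$. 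From this one gets $\cM=(\cM\cap N)\oplus(\cM\cap R)$ for every $\cM\in\cC$, and then both the spectral equality and the telescoping multiplicity count reduce to finite-dimensional linear algebra on $N$, exactly as you outline.

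There is, however, a genuine confusion in your treatment of the spectral equality. You label $\bigcup_{\cM}\sigma(T_{\cM})\setminus\{0\}\subseteq\sigma(T)\setminus\{0\}$ the easy inclusion and justify it by the implication ``$\lambda$ in no $\sigma(T_{\cM})$ $\Rightarrow$ $\lambda-T$ invertible'', but that implication is the contrapositive of the \emph{opposite} inclusion $\sigma(T)\setminus\{0\}\subseteq\bigcup_{\cM}\sigma(T_{\cM})\setminus\{0\}$. Moreover, the transfinite/Zorn argument you invoke for it is not routine: passing to closed linear spans requires a uniform bound on $\|(\lambda-T_{\cM})^{-1}\|$, which is precisely where compactness enters and which you do not supply. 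In fact your later Riesz-projection idea (locate a gap in $\cC^\prime$ at which $\dim(\cM\cap N)$ jumps) already proves this direction cleanly, so both the detour through $T^n$---whose root-of-unity obstruction you correctly flag---and the transfinite sketch are superfluous. The inclusion you actually labeled is obtained differently: if $\lambda\notin\sigma(T)$ and $\lambda\neq 0$, then each $\cM\in\cC$ is invariant under $(\lambda-T)^{-1}$ by the same resolvent reasoning as for $P_\lambda$, whence $\lambda-T_{\cM}$ is invertible on every quotient.
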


\begin{proof}
In the case of a compact operator $T$ the first equality is proved in \cite[Theorem 7.2.7]{RaRo}, while the second equality follows from 
the theorem \cite[Theorem 7.2.9]{RaRo} asserting that the algebraic multiplicity of each nonzero eigenvalue of $T$ is equal 
to its diagonal multiplicity with respect to any triangularizing chain.

An inspection of the proofs of these theorems reveals that it is enough to assume that the operator $T$ is power-compact.
Moreover, in \cite{Ko} the first equality was extended even to the case of polynomially compact operators.
\end{proof}

We will also need Pietsch's principle of related operators (see \cite[3.3.3]{Pi}).

\begin{thm}
\label{related_operators}
Let $A$ and $B$ be bounded operators on a Banach space. If $A B$ is power-compact, 
then $B A$ is power-compact and 
$$ m(A B, \lambda) = m(B A, \lambda) $$
for each $\lambda \in \bC \setminus \{0\}$.
\end{thm}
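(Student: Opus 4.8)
The plan is to treat the power-compactness and the multiplicity equality separately: the former is a one-line computation, while the latter rests on an intertwining argument identifying the generalized eigenspaces of $AB$ and $BA$ via the operators $A$ and $B$ themselves.

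First I would establish that $BA$ is power-compact. If $(AB)^n$ is compact, then the identity $(BA)^{n+1} = B (AB)^n A$ exhibits $(BA)^{n+1}$ as a compact operator sandwiched between two bounded operators, hence compact; thus $BA$ is power-compact. In particular, for every $\lambda \in \bC \setminus \{0\}$ both $\lambda - AB$ and $\lambda - BA$ have finite ascent, so the multiplicities $m(AB,\lambda)$ and $m(BA,\lambda)$ are well-defined finite numbers, and I may fix a single integer $k$ at least as large as both ascents. Then the generalized eigenspaces are $N := {\rm ker \,}\bigl((\lambda - AB)^k\bigr)$ and $M := {\rm ker \,}\bigl((\lambda - BA)^k\bigr)$, with $\dim N = m(AB,\lambda)$ and $\dim M = m(BA,\lambda)$.

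The algebraic heart of the matter is the intertwining relation $(\lambda - AB)\,A = A\,(\lambda - BA)$, which is immediate since both sides equal $\lambda A - ABA$; iterating gives $(\lambda - AB)^k A = A (\lambda - BA)^k$, and symmetrically $(\lambda - BA)^k B = B (\lambda - AB)^k$. Applying these to vectors in $M$ and $N$ respectively shows that $A$ maps $M$ into $N$ and $B$ maps $N$ into $M$. It then remains to see that the corestriction $A|_M \colon M \to N$ is a linear isomorphism, which yields $\dim M = \dim N$ and hence $m(AB,\lambda) = m(BA,\lambda)$. Since $(\lambda - BA)^k$ vanishes on $M$, the restriction of $BA$ to $M$ has spectrum $\{\lambda\}$, and as $\lambda \neq 0$ it is invertible on $M$; likewise $AB$ is invertible on $N$. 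Reading these two invertible operators as the compositions $BA|_M = (B|_N)(A|_M)$ and $AB|_N = (A|_M)(B|_N)$ forces $A|_M$ to be injective (from the first) and surjective onto $N$ (from the second), hence bijective.

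I expect the only point requiring genuine care to be the bookkeeping around these restrictions: one must verify that the compositions of the corestricted maps $A|_M \colon M \to N$ and $B|_N \colon N \to M$ really do recover the restrictions of $BA$ and $AB$ to $M$ and $N$, which is automatic once the inclusions $A(M) \subseteq N$ and $B(N) \subseteq M$ have been secured. Beyond the finite-dimensionality of $M$ and $N$, no compactness enters this final isomorphism step.
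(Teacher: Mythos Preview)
Your argument is correct. The identity $(BA)^{n+1}=B(AB)^nA$ disposes of power-compactness, and the intertwining relations $(\lambda-AB)^kA=A(\lambda-BA)^k$ and $(\lambda-BA)^kB=B(\lambda-AB)^k$ do force $A(M)\subseteq N$ and $B(N)\subseteq M$. Your invertibility step is also sound: on the finite-dimensional space $M$ the operator $BA|_M$ is $\lambda$ plus a nilpotent, hence invertible, which gives injectivity of $A|_M$; symmetrically the invertibility of $AB|_N$ gives surjectivity, since any $y\in N$ equals $A(Bz)$ with $Bz\in M$. The edge case $M=\{0\}$ is harmless, as the same surjectivity reasoning then forces $N=\{0\}$.

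As for comparison with the paper: the paper does not actually prove this theorem. It is stated as Pietsch's principle of related operators and justified solely by the citation \cite[3.3.3]{Pi}. What you have written is essentially a clean, self-contained version of Pietsch's argument, so you are supplying the proof that the paper outsources. The benefit of your write-up is that it keeps the note self-contained; the paper's choice keeps the preliminaries short by deferring to a standard reference.
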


The following theorem is a consequence of \cite[Theorem 4.3]{Ma}; see a recent paper \cite[Theorem 0.1]{HKO}
which also contains the easily proved proposition \cite[Proposition 0.2]{HKO} that 
a positive operator is ideal-irreducible if and only if  it is semi non-supporting (the notion used in  \cite{Ma}).

\begin{thm}
\label{equal}
Let $S$ and $T$ be positive operators on a Banach lattice $E$ such that $S \le T$ and $r(S) = r(T)$. 
If $T$ is an ideal-irreducible power-compact operator, then $S = T$.
\end{thm}

\section{Results}

The main result of this note is the following extension of Theorem \ref{compact} (and \cite[Theorem 2.4]{BDFRZ} as well).

\begin{thm}
 \label{main}
Let $A$ and $B$ be bounded operators on a Banach lattice $E$ such that $AB \ge  BA \ge 0$ and $AB$ is a power-compact operator.
Then  the commutator $C = A B - B A$ is an ideal-triangularizable quasi-nilpotent operator.
\end{thm}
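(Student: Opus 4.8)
The strategy is to combine the spectral machinery (Theorems \ref{Ringrose}, \ref{related_operators}) with the rigidity result Theorem \ref{equal}, exactly as in the treatment of the compact case, but now keeping careful track of which operators are power-compact. First, I would observe that $C = AB - BA$ is a positive operator (given), and that $C = AB - BA$ together with $AB$ power-compact means, by Pietsch's principle (Theorem \ref{related_operators}), that $BA$ is power-compact as well, hence so is $C$, being the difference of two commuting... no — $AB$ and $BA$ need not commute, but $C = AB - BA$ is still a difference of two power-compact operators, hence power-compact? That is false in general, so instead I would note $C = AB - BA \le AB$ with $0 \le BA$, so $0 \le C \le AB$; since $AB$ is power-compact and $C$ is dominated by it, one needs a domination result for power-compactness of positive operators — but more simply, $C$ itself need not a priori be power-compact, so the cleanest route is: show $r(C) = 0$ first by a spectral argument on $AB$ and $BA$, then deduce triangularizability.

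For the quasi-nilpotency, the key algebraic identity is $A(BA) = (AB)A$ and similarly $B(AB) = (BA)B$, so $AB$ and $BA$ are related operators in Pietsch's sense and $m(AB,\lambda) = m(BA,\lambda)$ for every $\lambda \neq 0$ (Theorem \ref{related_operators}). Now consider the nonzero spectrum: since $0 \le BA \le AB$ and $AB$ is power-compact, I would argue via Theorem \ref{equal} applied along a maximal chain of closed ideals. Concretely: let $\cC$ be a maximal chain of closed ideals of $E$ invariant under $AB$ (such exists by a Zorn's lemma argument, and it is automatically complete); the goal is to show every member of $\cC$ is also invariant under $BA$ and under $C$. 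On each quotient $\cM/\cM_-$ the induced operators $\widehat{AB}$, $\widehat{BA}$ satisfy $0 \le \widehat{BA} \le \widehat{AB}$, with $\widehat{AB}$ ideal-irreducible and power-compact. By Ringrose's theorem (Theorem \ref{Ringrose}) and $m(AB,\lambda) = m(BA,\lambda)$, the multiplicities of $\lambda \neq 0$ for $\widehat{AB}$ summed over the chain equal those for $BA$; comparing with the domination $\widehat{BA} \le \widehat{AB}$ forces $r(\widehat{BA}) = r(\widehat{AB})$ on each nonzero-spectrum block, whence Theorem \ref{equal} gives $\widehat{BA} = \widehat{AB}$ on that quotient, i.e. $\widehat{C} = 0$ there. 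Since this holds on every diagonal block of a maximal chain, $\sigma(C) \setminus \{0\} = \bigcup \sigma(\widehat{C}) \setminus\{0\} = \emptyset$ by Ringrose again (once $C$ is known power-compact, which follows since $C$ is now seen to be "nilpotent modulo the chain" — or, more carefully, since $0 \le C \le AB$ and a positive operator dominated by a power-compact positive operator on a Banach lattice is itself power-compact, a standard fact I would cite or prove).

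The main obstacle I anticipate is the bookkeeping needed to run Theorem \ref{equal} quotient-by-quotient: one must verify that the induced operators on each $\cM/\cM_-$ really do satisfy the hypotheses (positivity of the dominated operator, ideal-irreducibility of the dominating one, and equality of spectral radii on the relevant block), and that the maximal chain of ideals invariant under $AB$ can be chosen to also be invariant under $BA$ and $C$ — this last point requires showing that irreducibility of a diagonal block of $AB$ forces $BA = AB$ there, so that $C$ leaves the same ideals invariant, allowing the chain to be simultaneously triangularizing. A secondary technical point is establishing that $C$ is power-compact (to legitimately invoke Theorem \ref{Ringrose} for $C$ and conclude $r(C) = 0 \Rightarrow$ quasi-nilpotent); I would handle this either by the domination argument $0 \le C \le AB$ on a Banach lattice together with the fact that the cube (or suitable power) of a dominated positive operator is compact when the dominating one is power-compact, or by deriving it a posteriori from the triangularization. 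Once quasi-nilpotency is in hand, ideal-triangularizability follows by extending the chain $\cC$ to a maximal chain of closed ideals of $E$ and observing, via Ringrose and the block-wise equalities already established, that every new member is still invariant under $C$.
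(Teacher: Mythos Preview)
Your overall strategy is the paper's own: use a maximal chain of $AB$-invariant closed ideals, compare $(AB)_{\cM}$ and $(BA)_{\cM}$ on each diagonal block via Theorem~\ref{equal}, and deduce $C_{\cM}=0$ for every $\cM$, hence $r(C)=0$ by Ringrose. Two points in your plan need correcting.

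First, the invariance of the chain under $BA$ and $C$ is not an obstacle and certainly does not ``require showing that irreducibility of a diagonal block of $AB$ forces $BA=AB$ there'' --- that reasoning is circular, since you cannot even form $(BA)_{\cM}$ until you know $\cM$ is $BA$-invariant. The correct (and trivial) argument is that closed ideals are solid: if $0\le S\le T$ and a closed ideal $\cM$ is $T$-invariant, then for $x\in\cM^+$ we have $0\le Sx\le Tx\in\cM$, so $Sx\in\cM$. Thus every $AB$-invariant closed ideal is automatically invariant under $BA$ and under $C$. Power-compactness of $BA$ and $C$ then follows from the Aliprantis--Burkinshaw domination theorem (the ``cube'' result you allude to), exactly as the paper cites.

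Second, your sentence ``comparing with the domination $\widehat{BA}\le\widehat{AB}$ forces $r(\widehat{BA})=r(\widehat{AB})$ on each nonzero-spectrum block'' hides the only nontrivial step. Knowing that $\sum_{\cM} m((AB)_{\cM},\lambda)=\sum_{\cM} m((BA)_{\cM},\lambda)$ for every $\lambda\neq 0$ does not by itself give equality of spectral radii block by block. The paper's device is: assume some block has $r((AB)_{\cM})>r((BA)_{\cM})$, and among such blocks choose one with $\lambda_0:=r((AB)_{\cM_0})$ \emph{maximal} (possible since $AB$ has only finitely many eigenvalues of modulus $\ge\epsilon$). On blocks with $r((AB)_{\cM})>\lambda_0$ the radii agree, so Theorem~\ref{equal} gives $(AB)_{\cM}=(BA)_{\cM}$ and the multiplicities at $\lambda_0$ match; on blocks with $r((AB)_{\cM})<\lambda_0$ both multiplicities at $\lambda_0$ vanish; and on the bad blocks $m((AB)_{\cM},\lambda_0)>0=m((BA)_{\cM},\lambda_0)$. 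Summing contradicts Theorem~\ref{related_operators}. This extremal choice of $\lambda_0$ is the missing idea in your sketch.
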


\begin{proof}
Let $\cC$ be a chain (of closed ideals) that is maximal in the lattice of all closed ideals
invariant under $A B$. By maximality, this chain is complete. 
 Let $\cC^\prime$ be a subchain of all subspaces $\cM \in \cC$ such that $\cM_{-} \neq \cM$.
Since $A B \ge B A \ge 0$ and $A B \ge C \ge 0$, every member of $\cC$ is also invariant under the operators $BA$ and $C$, and these operators are 
power-compact operators by the Aliprantis-Burkinshaw theorem \cite[Theorem 5.14]{AlBu}.
For any ideal $\cM \in \cC^\prime$, $r((A B)_{\cM}) \ge r((B A)_{\cM})$, since $(A B)_{\cM} \ge (B A)_{\cM} \ge  0$. 
We will prove that $r((A B)_{\cM}) = r((B A)_{\cM})$ for every ideal $\cM \in \cC^\prime$, and so 
$(A B)_{\cM} = (B A)_{\cM}$ by Theorem \ref{equal}.

Assume there are ideals  $\cM \in \cC^\prime$ such that $r((A B)_{\cM}) > r((B A)_{\cM})$. 
Among them choose  $\cM_0 \in \cC^\prime$  for which $\lambda_0 := r((A B)_{\cM_0})$ is maximal. Such an ideal exists, because  
for each $\epsilon > 0$ there are only finitely many eigenvalues of $A B$ with the absolute value at least $\epsilon$.
For each ideal $\cM \in \cC^\prime$ with $r((A B)_{\cM}) > \lambda_0$, we must have  $r((A B)_{\cM}) = r((B A)_{\cM})$, and so 
$(A B)_{\cM} = (B A)_{\cM}$ by Theorem \ref{equal}. The same conclusion holds in the case when $r((A B)_{\cM}) = r((B A)_{\cM}) = \lambda_0$.
If  $ \lambda_0 = r((A B)_{\cM}) > r((B A)_{\cM})$, then  
$$ m((A B)_{\cM}, \lambda_0) > 0 = m((B A)_{\cM}, \lambda_0). $$
If  $r((A B)_{\cM}) < \lambda_0$, then  
$$ m((A B)_{\cM}, \lambda_0) = 0 =  m((B A)_{\cM}, \lambda_0). $$
In view of Theorem \ref{Ringrose} we now conclude that  
$m(AB, \lambda_0) > m(BA, \lambda_0)$. However, by Theorem \ref{related_operators}, we have 
$m(AB, \lambda_0) = m(BA, \lambda_0)$.  
This contradiction shows that, for each $\cM \in \cC^\prime$,
$(A B)_{\cM} = (B A)_{\cM}$ and so $C_{\cM} = (A B)_{\cM} - (B A)_{\cM} = 0$.
By Theorem \ref{Ringrose}, we conclude that $C$ is quasi-nilpotent.

Finally, it is a simple consequence (see e.g. \cite[Theorem 1.3]{DK}) of the well-known de Pagter's theorem 
(see \cite[Theorem 9.19]{AbAl} or \cite{Pa86}) that $C$ has a triangularizing chain of closed ideals of $E$.
In fact, we can simply complete the chain $\cC$ to a triangularizing chain of closed ideals for the operator $C$. 
\end{proof}

As a corollary we obtain the answer to an open question posed in \cite[Open questions 3.7 (1)]{BDFRZ}.

\begin{cor}
\label{open}
Let $A$ and $B$ be positive operators on a Banach lattice $E$ such that the commutator $C = A B - B A$ is a positive operator.
If one of the operators  $A$ and $B$ is power-compact (in particular, compact), 
then the commutator $C$ is an ideal-triangularizable quasi-nilpotent operator.
\end{cor}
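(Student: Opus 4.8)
The plan is to derive Corollary~\ref{open} directly from Theorem~\ref{main} by checking its only nontrivial hypothesis, namely that the product $A B$ is power-compact. Note first that since $A, B \ge 0$ and $C = A B - B A \ge 0$, we automatically have $A B = C + B A \ge B A \ge 0$, so the order requirement of Theorem~\ref{main} holds for free; only the power-compactness of $A B$ needs work.

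The crucial point is the operator inequality
$$ 0 \le (A B)^k \le A^k B^k \qquad (k \in \bN) , $$
valid whenever $A, B \ge 0$ and $B A \le A B$. The lower bound is immediate from $A B \ge 0$. For the upper bound I would induct on $k$: the case $k = 1$ is trivial. First one records the auxiliary inequality $B A^k \le A^k B$, obtained by inserting $B A \le A B$ successively between powers of $A$, each step being multiplication of $B A \le A B$ on the left and right by suitable nonnegative powers of $A$. Then, assuming $(A B)^k \le A^k B^k$, multiply on the left by $A B \ge 0$ to get $(A B)^{k+1} \le A\,(B A^k)\,B^k$, and apply $B A^k \le A^k B$ (multiplied by $A \ge 0$ on the left and $B^k \ge 0$ on the right) to conclude $A\,(B A^k)\,B^k \le A^{k+1} B^{k+1}$.

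Now suppose $A^n$ is compact for some $n \in \bN$ (the case in which some power of $B$ is compact is handled identically). Since the compact operators form a two-sided operator ideal, $A^n B^n$ is compact; hence $0 \le (A B)^n \le A^n B^n$ with $A^n B^n$ compact, and the Aliprantis--Burkinshaw theorem \cite[Theorem 5.14]{AlBu} gives that $(A B)^n$, and therefore $A B$, is power-compact. With $A B \ge B A \ge 0$ and $A B$ power-compact established, Theorem~\ref{main} applies and yields that $C = A B - B A$ is an ideal-triangularizable quasi-nilpotent operator.

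The only step that needs an idea rather than bookkeeping is spotting the domination $(A B)^k \le A^k B^k$, which is what lets power-compactness of a single factor propagate to the product; everything else is either a direct consequence of positivity or a citation. In writing this up I would be careful to check that the induction uses nothing beyond $A, B \ge 0$ and $B A \le A B$ (in particular, no commutativity), which it does not.
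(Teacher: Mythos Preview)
Your proposal is correct and follows essentially the same approach as the paper: establish $0 \le (AB)^k \le A^k B^k$ by induction, use power-compactness of one factor to make $A^n B^n$ compact, invoke the Aliprantis--Burkinshaw domination theorem to conclude that $AB$ is power-compact (indeed $(AB)^{3n}$ is compact), and then apply Theorem~\ref{main}. The paper simply states the inequality $(AB)^n \le A^n B^n$ ``by a simple induction,'' whereas you spell out the auxiliary step $B A^k \le A^k B$; otherwise the arguments coincide.
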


\begin{proof}
By a simple induction, we have $0 \le (A B)^n \le A^n B^n$  for every  $n \in \mathbb{N}$.
Assume now that for $n \in \mathbb{N}$ one of the operators $A^n$ and $B^n$ is compact, so that 
the operator $A^n B^n$ is compact. Then the operator $(AB)^{3n}$ is also compact by the
Aliprantis-Burkinshaw theorem \cite[Theorem 5.14]{AlBu}.
Therefore,  Theorem \ref{main} can be applied.
\end{proof}

It should be noted that a recent preprint \cite[Theorem 4.5]{Ga} gives an independent proof 
of Corollary \ref{open} in the case when one of the operators  $A$ and $B$ is compact.

\subsection*{Acknowledgements}
This research was partly supported by the Slovenian Research Agency.

\end{document}